
\documentclass[12pt]{amsart}
\usepackage{amsmath}
\usepackage{enumerate}
\usepackage{amssymb}
\usepackage{amsbsy}
\usepackage{amsfonts}
\textwidth 5.5truein

\textheight 8truein \hoffset -.97in \voffset -.49in \footskip 36pt
\setlength{\textwidth}{170mm} \setlength{\textheight}{230mm}
\setlength{\oddsidemargin}{20mm} \setlength{\evensidemargin}{20mm}

\theoremstyle{plain}

\newtheorem{prop}{Proposition}[section]
\newtheorem{lem}[prop]{Lemma}
\newtheorem{thr}[prop]{Theorem}
\newtheorem{cor}[prop]{Corollary}
\theoremstyle{definition}

\newtheorem{rem}[prop]{Remark}
\newtheorem{dfn}[prop]{Definition}

\newtheorem*{re*}{Remark}
\newtheorem*{ex*}{Example}
\numberwithin{equation}{section}

\newcommand{\sm}{\begin{smallmatrix}}
\newcommand{\esm}{\end{smallmatrix}}

\newfont{\FieldFont}{msbm10 scaled\magstep1}

\newcommand{\mat}{\begin{pmatrix}}
\newcommand{\emat}{\end{pmatrix}}

\def\Ei{{\rm Ei}}

\begin{document}

\title
{Twisted traces of singular moduli of weakly holomorphic modular functions}

\author{D. Choi}
\address{School of Liberal Arts and Sciences, Korea Aerospace University, 200-1,
Hwajeon-dong, Goyang, Gyeonggi 412-791, Korea}
\email{choija@postech.ac.kr}

\subjclass[2000]{11F27, 11F37} \keywords{modular traces, theta liftings}

\begin{abstract}
Zagier proved that the generating series for the traces of singular moduli is a \textit{weakly holomorphic} modular form of weight $3/2$ on $\Gamma_0(4)$. Bruinier and Funke extended the results of Zagier to modular curves of arbitrary genus. Zagier also showed that the twisted traces of singular moduli are generated by a weakly holomorphic modular form of weight $3/2$. In this paper, we study the extension of Zagier's result for the twisted traces of singular moduli to congruence subgroups $\Gamma_0(N)$. As an application, we study congruences for the twisted traces of singular moduli of weakly holomorphic modular functions on $\Gamma_0(N)$.
\end{abstract}

\maketitle

\section{\bf Introduction}
Let $j(z)$ be the usual $j$-invariant function defined for $z$ in the complex
upper half plane $\mathbb{H}$ by $j(z)= q^{-1} + 744 + 196884 q + \cdots, $ where $q=e(z)=e^{2 \pi i z}$. The function $J(z)=j(z)-744$ is the normalized Hauptmodul for the group $\Gamma(1)=PSL_2(\mathbb Z)$. For a positive integer $D$ congruent to 0 or 3 modulo 4,
 denote by ${\mathcal Q}_D$ the set of positive definite integral binary quadratic
 forms $$Q(x,y)=[a,b,c]=ax^2+bxy+cy^2$$ with discriminant
 $-D=b^2-4ac$.  The group $\Gamma(1)$ acts on ${\mathcal Q}_D$ by
 $Q \circ \left( \sm \alpha & \beta \\  \gamma&\delta  \esm\right)=Q(\alpha x+\beta y, \gamma x+ \delta y)$. For each $Q\in {\mathcal Q}_D$ let
 $$z_Q=\frac{-b+i\sqrt{D}}{2a},$$ the corresponding CM point in
 $\mathbb{H}$ and $\Gamma(1)_Q$ denote the stabilizer of $Q$ in $\Gamma(1)$. The values
of $j$ or other modular functions at CM points $z_Q$ are
known as \textit{singular moduli}, and they play important roles in number theory.
For example, if $-D$ is a negative fundamental discriminant, then
$j(z_Q)$ generates the Hilbert class field of $\mathbb{Q}(\sqrt{-D})$ (see \cite{Cox}). We define the trace of singular moduli of index $D$ by
\begin{equation}\label{zt}\textbf{t}_{J}(D)=\sum_{Q\in {\mathcal Q}_D/\Gamma(1)}
 \frac{1}{|{\Gamma}(1)_Q|} J(z_Q).\end{equation}
In [23, Theorem 1], Zagier proved that the generating series for the traces of singular moduli
 \begin{equation}\label{z}g(z):=q^{-1}-2-\mathop{\sum_{D>0}}_{D\equiv 0,3 (\textrm{mod}\ 4)}
 \textbf{t}_{J}(D)q^D=q^{-1}-2+248q^3-492q^4+\cdots\end{equation}
is a weakly holomorphic modular form of weight $3/2$ on $\Gamma_0(4)$, that is, holomorphic on $\mathbb{H}$ and
meromorphic at each cusp. The results of Zagier were extenend in \cite{Kim1} and \cite{Kim2} to congruence
subgroups $\Gamma_0(N)$ of genus zero with prime levels $N$.

Bruinier and Funke \cite{BF} generalized the results of Zagier to the traces of singular moduli of
modular functions on congruence subgroups of arbitrary genus. They proved that the generating
series for the traces of CM values of a weakly holomorphic modular function on a modular curve
of arbitrary genus is given by the holomorphic part of a harmonic weak Maass form of weight
$3/2$. In \cite{Zagier}, Zagier defined the twisted traces of singular moduli for $J(z)$ and proved that
the generating series for the twisted traces of singular moduli is also a weakly holomorphic
modular form of weight $3/2$. In this paper, using the method of Bruinier and Funke \cite{BF}, we
study modularity of the twisted traces of CM values of weakly holomorphic modular functions
and their congruences.

Following the definition of Zagier \cite{Zagier}, we define the twisted traces of a weakly holomorphic
modular function on $\Gamma_0(N)$. For a positive integer $N$, let $\mathcal{Q}_{D,N}$ be the set of quadratic forms
$Q\in \mathcal Q_{D}$ such that $a \equiv  0 \pmod{N}.$ We note that $-D$ is congruent to a square modulo $4N$
and the group $\Gamma_0(N)$ acts on $Q_{D,N}$ with finitely many orbits, where the action of $\Gamma_0(N)$ is
defined as above. Let $\Gamma_0(N)_Q$ be the stabilizers of $Q$ in $\Gamma_0(N)$. Let $\Delta \in \mathbb{Z}$ be a fundamental
discriminant and $r \in \mathbb{Z}$ such that $\Delta \equiv r^2 \pmod{4N}$. Following the definition in \cite{GKZ}, we define
a generalized genus character for $[Na, b, c] \in  \mathcal{Q}_{D,N}$ as follows:
$$\chi_{\Delta}(X)=\chi_{\Delta}([Na,b,c])=\left\{
\begin{array}{ll}
  \left(\frac{\Delta}{n}\right), &
  \begin{array}{l}
  \text{if } \Delta|b^2-4Nac \text{ and } (b^2-4Nac)/\Delta  \text{ is a square }\\
    \text{modulo } 4N \text{ and } \gcd(a,b,c,\Delta)=1,
  \end{array}\\
  0, &\text{ otherwise}.
\end{array}
\right.$$
Here $n$ is any integer prime to $\Delta$ represented by one of the quadratic forms $[N_1a,b,N_2c]$ with
$N_1N_2 = n$ and $N_1,N_2 > 0$ (see 1.2 in \cite{GKZ}). If $f$ is a weakly holomorphic modular function on
$\Gamma_0(N)$, then the twisted trace of $f$ of positive index $D$ is defined by
\begin{equation}\label{first}
\textbf{\bf t}_f(\chi_{\Delta};D) =
 \underset{Q\in\mathcal{Q}_{D,N}/\Gamma_0(N)}{\sum}\chi_{\Delta}(Q)\cdot
  \frac{f (z_Q)}{|\overline{\Gamma_0(N)}_Q|} ,
\end{equation}
With these notations, we state our main theorem.

\begin{thr}\label{main1} Suppose that $N$ is a positive integer, and that $f(z)$ is
a weakly holomorphic modular function on $\Gamma_0(N)$. If $r$ is a sufficiently large integer, then
for each positive odd integer $t$ the function
$$\sum_{\sm m>0 \\ \left(\frac{rm}{t}\right)=-1 \esm}{\bf t}_f(\chi_{\Delta};rm)q^m$$
is a weakly holomorphic modular form of weight $3/2$ on $\Gamma_1( 4 r t^2 N)$. Here,
$\left(\frac{m}{t}\right)$ denotes the Jacobi symbol.
\end{thr}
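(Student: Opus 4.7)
The plan is to adapt the theta-lift machinery of Bruinier--Funke \cite{BF} to the twisted setting by inserting the genus character $\chi_\Delta$ into the Kudla--Millson theta kernel, and then to exploit the Jacobi-symbol condition on $m$ to sieve away the non-weakly-holomorphic part of the resulting harmonic weak Maass form.

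First, I would construct a twisted Kudla--Millson theta kernel $\Theta_\Delta(\tau,z)$ on $\mathbb{H}\times\mathbb{H}$ by building $\chi_\Delta$ into the usual Siegel theta series attached to the signature $(1,2)$ rational quadratic space underlying $\Gamma_0(N)$, exactly in the spirit of the construction in \cite{GKZ} used to define $\chi_\Delta$. In the variable $\tau$ this kernel transforms with weight $3/2$ under a congruence subgroup whose level divides $4N$ times the conductor of the Kronecker character $\left(\frac{\Delta}{\cdot}\right)$, while in $z$ it is $\Gamma_0(N)$-invariant. Following Section 4 of \cite{BF} I would then form the regularized theta integral
$$\Phi_\Delta(\tau,f)=\int^{\mathrm{reg}}_{\Gamma_0(N)\backslash\mathbb{H}} f(z)\,\Theta_\Delta(\tau,z)\,\frac{dx\,dy}{y^2},$$
verify that it is a harmonic weak Maass form of weight $3/2$ in $\tau$, and unfold against the CM divisor $\{z_Q\}_{Q\in\mathcal{Q}_{D,N}/\Gamma_0(N)}$ to identify the $D$-th Fourier coefficient of its holomorphic part with $\mathbf{t}_f(\chi_\Delta;D)$ up to a nonzero normalization.

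Next, the obstruction to $\Phi_\Delta(\tau,f)$ being weakly holomorphic is the shadow $g=\xi_{3/2}\Phi_\Delta(\tau,f)$, a holomorphic cusp form of weight $1/2$ on the same congruence subgroup. By the Serre--Stark classification, $g$ is a finite linear combination of unary theta series $\sum_{n\in\mathbb{Z}}\psi(n)q^{\alpha n^2}$. Consequently, the Fourier expansion of the non-holomorphic part of $\Phi_\Delta(\tau,f)$ is supported on indices of the shape $\alpha n^2$ for the finitely many positive integers $\alpha$ that appear in this decomposition, all of them bounded in terms of the level alone.

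The main step, and the principal obstacle, is the sieving argument. For $D=\alpha n^2$ one has
$$\left(\frac{rD}{t}\right)=\left(\frac{r\alpha}{t}\right)\left(\frac{n}{t}\right)^{2}\in\Bigl\{\,0,\;\left(\tfrac{r\alpha}{t}\right)\Bigr\}.$$
Because only finitely many $\alpha$ occur, one can take $r$ sufficiently large to arrange $\left(\frac{r\alpha}{t}\right)\neq -1$ for every such $\alpha$ by imposing finitely many congruence conditions on $r$ modulo $t$. Under this choice, every index $D$ with $\left(\frac{rD}{t}\right)=-1$ lies outside the support of the non-holomorphic part, so the Jacobi-symbol-sieved series of holomorphic coefficients is exactly the sieved series of $\mathbf{t}_f(\chi_\Delta;D)$. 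Setting $D=rm$ and reindexing, this is the Jacobi-symbol twist of the $U_r$-image of a weakly holomorphic form of weight $3/2$; a standard half-integral weight computation of the level induced by this quadratic twist gives $\Gamma_1(4rt^2N)$. The delicate bookkeeping is precisely that the sieve isolates a single nonzero value of $\left(\frac{\cdot}{t}\right)$, so the nebentypus of the twist becomes trivial after passing to the enlarged level $4rt^2N$, justifying the conclusion on $\Gamma_1$ rather than on $\Gamma_0$ with a nontrivial character.
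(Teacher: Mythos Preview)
Your overall architecture—build a $\chi_\Delta$-twisted Kudla--Millson kernel, take the theta lift, identify the holomorphic Fourier coefficients with $\mathbf{t}_f(\chi_\Delta;m)$, and then sieve away the non-holomorphic piece—is exactly the paper's strategy. The gap is in the sieving step, where you have misread both the support of the non-holomorphic part and the role of $r$.

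The non-holomorphic part of $I_\Delta(\tau,f)$ is supported at \emph{negative} square indices $-m^2$, not at positive indices $\alpha n^2$ (see the paper's Theorem~\ref{main3}; in Serre--Stark language, the shadow is a weight-$1/2$ form, so the non-holomorphic coefficients sit at the negatives of its support). At such an index one has
\[
\left(\tfrac{-m^2}{t}\right)=\left(\tfrac{-1}{t}\right)\left(\tfrac{m}{t}\right)^2\in\Bigl\{0,\ \left(\tfrac{-1}{t}\right)\Bigr\},
\]
so the linear combination of twists $F_t=\tfrac12\bigl[(\cdot/t)^2-(-1/t)(\cdot/t)\bigr]\cdot I_\Delta$ annihilates the non-holomorphic part \emph{for every $t$}, with no reference to $r$ whatsoever. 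After this sieve, $F_t$ is already a weakly holomorphic form on $\Gamma_1(4t^2N)$ whose Fourier expansion is $\sum_{m>m_0,\ (m/t)=-1}\mathbf{t}_f(\chi_\Delta;m)q^m$, with the lower bound $m_0$ coming from the finitely many nonzero terms $\mathbf{t}_f(\chi_\Delta;-m^2)$ in the principal part (Proposition~\ref{4.3}), and crucially $m_0$ is independent of $t$.

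The operator $U_r$ enters only at the very end, and only to push that fixed principal part off to the left: for $r$ larger than $|m_0|$ one has $F_t\!\mid\! U_r=\sum_{m>0,\ (rm/t)=-1}\mathbf{t}_f(\chi_\Delta;rm)q^m$. This is why the theorem holds for \emph{all} sufficiently large integers $r$, uniformly in $t$. Your proposal instead tries to use $r$ to control the Jacobi symbol on the non-holomorphic support by ``imposing finitely many congruence conditions on $r$ modulo $t$''; but congruence conditions on $r$ are not the same as ``$r$ sufficiently large'', and the statement you are proving must hold for every large $r$, not just those in a chosen residue class. So as written, your argument would not establish the theorem. The fix is exactly the paper's observation: once you note the correct (negative-square) support, the $t$-sieve alone kills the non-holomorphic part, and $r$ plays the much more modest role of clearing a fixed finite principal part.
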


As an application, we study congruence properties for the twisted traces of CM values of
weakly holomorphic modular functions on $\Gamma_0(N)$. Ahlgren and Ono  \cite{A-O} studied divisibility
of the traces of singular moduli in terms of the factorization of primes in imaginary quadratic
fields. For example, they proved that for each positive integer $\nu$, a positive proportion of primes
$r$ has the property that $\textbf{t}_{J}(r^3n)\equiv 0 \pmod{p^{\nu}}$ for every positive integer $n$ coprime to $r$ such
that $p$ is inert or ramified in $\mathbb{Q}\left(\sqrt{-nr}\right)$. This result was extended in \cite{T} and \cite{CK} to the traces of
singular moduli of a weakly holomorphic modular function on $\Gamma_0(N)$ for any integer $N$. Here,
 $\Gamma_0^*(N)$ denotes the group extension of $\Gamma_0(N)$ by the group of Atkin-Lehner involutions $W_p$ for
all primes $p\mid N$. We obtain analogues of the results in \cite{A-O}, \cite{T} and \cite{CK} for the twisted traces of
CM values of weakly holomorphic modular functions.

\begin{cor}\label{main2}
Suppose that $p$ is an odd prime, and $N$, $p \nmid N$, and $t$ are a positive odd integer. Let $K$ be an algebraic number field. Suppose
that $f(\tau)=\sum a(n)q^n\in K((q))$ is a weakly holomorphic
modular function on $\Gamma_0(N)$. Then there
exists an integer $\Omega$ such that if $m$ is sufficiently large,
then for each positive integer $\nu$, a positive proportion of
primes $r\equiv -1 \pmod{4 t^2 Np^\nu}$ have the property that
$$\Omega{\bf t}_f(\chi_{\Delta};r^3p^m n)\equiv 0 \pmod{p^\nu}$$
for all $n$ such that $\gcd(n,rpN)=1$ and $\left(\frac{r^3p^m n}{t}\right)=-1$.
\end{cor}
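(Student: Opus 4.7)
Our proof adapts the strategy of Ahlgren--Ono \cite{A-O} and Treneer \cite{T} to the twisted-trace setting. Fix a sufficiently large prime $r$ with $r \equiv -1 \pmod{4t^2Np^\nu}$. By Theorem~\ref{main1}, the series
$$F(z) \;=\; \sum_{\left(\frac{r\ell}{t}\right)=-1} \mathbf{t}_f(\chi_{\Delta};\,r\ell)\,q^\ell$$
is a weight-$3/2$ weakly holomorphic modular form on $\Gamma_1(4rt^2N)$. Because $f \in K((q))$, a standard denominator bound on singular moduli produces an algebraic integer $\Omega$, independent of the eventual choices of $r,m,n$, such that $\Omega F$ has $p$-integral Fourier coefficients at the cusp $\infty$.

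Since $F$ is only weakly holomorphic, we next pass to a holomorphic object modulo $p^\nu$ using Treneer's framework \cite{T}. For every sufficiently large exponent $m$, one produces a cusp form $G$ of half-integral weight on $\Gamma_1(4rt^2Np^c)$, for some $c \geq 1$, with $p$-integral Fourier coefficients satisfying
$$G(z) \;\equiv\; \sum_{\gcd(k,p)=1}\Omega\,\mathbf{t}_f(\chi_{\Delta};\,rp^mk)\,q^k \pmod{p^\nu}.$$
This is achieved by applying $U_{p^m}$ to $\Omega F$ and then multiplying by an Eisenstein-type series that is congruent to $1$ modulo $p^\nu$ at $\infty$ and vanishes to large order at every other cusp, thereby cancelling the poles of $\Omega F$ away from $\infty$.

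Next, Serre's theorem on the nilpotence of Hecke operators modulo $p^\nu$, in its half-integral-weight form (cf.~\cite{A-O,T}), implies that a positive proportion of primes $r \equiv -1 \pmod{4t^2Np^\nu}$ satisfy $G\,|\,T_{r^2}\equiv 0 \pmod{p^\nu}$. The Shimura Hecke formula for weight $3/2$ then yields, at each Fourier coefficient,
$$a_G(r^2k) \;+\; \chi(r)\Big(\tfrac{-k}{r}\Big)a_G(k) \;+\; \chi(r)^2\,r\,a_G(k/r^2) \;\equiv\; 0 \pmod{p^\nu},$$
where $\chi$ denotes the nebentypus inherited by $G$. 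The theta-lift construction underlying Theorem~\ref{main1} introduces the Kronecker symbol $\left(\frac{\cdot}{r}\right)$ into that nebentypus, so $\chi(r)=0$; taking $k=n$ with $\gcd(n,rpN)=1$ also kills the third term, leaving $a_G(r^2n) \equiv 0 \pmod{p^\nu}$. Unwinding the definition of $G$ then produces the desired $\Omega\,\mathbf{t}_f(\chi_{\Delta};\,r^3p^mn) \equiv 0 \pmod{p^\nu}$.

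The principal obstacle is the dual role of the prime $r$: it is at once the index appearing in Theorem~\ref{main1} (contributing to the level and nebentypus of $F$) and the Hecke prime produced by Serre's theorem. These two roles are compatible only because the nebentypus of $F$ contains the factor $\left(\frac{\cdot}{r}\right)$, which is precisely what forces $\chi(r)=0$ and makes the Shimura relation collapse to the clean congruence $a_G(r^2n)\equiv 0$. Verifying this character-theoretic compatibility for the specific theta lift behind Theorem~\ref{main1}, and keeping careful track of the restriction $\left(\frac{r^3p^mn}{t}\right)=-1$ that persists through the $U_{p^m}$-sieve, is the main bookkeeping step of the argument.
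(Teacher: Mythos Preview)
Your proposal contains a genuine circularity in the role of the prime $r$, and the fix you suggest does not close the gap. You begin by \emph{fixing} a large prime $r$ and invoking Theorem~\ref{main1} to produce a form $F$ on $\Gamma_1(4rt^2N)$; the cusp form $G$ you construct from $F$ then also has level divisible by $r$. But Serre's nilpotence theorem, in the half-integral-weight form used by Ahlgren--Ono and Treneer, supplies a positive proportion of Hecke primes $\ell$ that are \emph{coprime to the level} of $G$ and satisfy $G\,|\,T_{\ell^2}\equiv 0\pmod{p^\nu}$. Since $r$ divides that level, the theorem never returns $\ell=r$; indeed, the primes it produces are $\equiv -1\pmod{4rt^2Np^c}$, hence $\not\equiv 0\pmod r$. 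Your argument that ``the nebentypus contains $\left(\frac{\cdot}{r}\right)$, so $\chi(r)=0$'' addresses the wrong issue: the Shimura--Hecke relation with $\chi(r)=0$ would only be relevant if Serre's theorem had handed you $r$ in the first place, which it cannot. There is also no justification given that $U_r$ introduces $\left(\frac{\cdot}{r}\right)$ into the nebentypus; Theorem~\ref{main1} only asserts modularity on $\Gamma_1(4rt^2N)$.

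The paper avoids this entirely by \emph{not} using Theorem~\ref{main1} here. It goes back one step in the proof of Theorem~\ref{main1} and works with the intermediate form
\[
F_t(\tau)=\sum_{\substack{m>m_0\\ \left(\frac{m}{t}\right)=-1}} \mathbf{t}_f(\chi_\Delta;m)\,q^m
\]
of weight $3/2$ on $\Gamma_1(4t^2N)$, whose level is \emph{independent of $r$}. After checking algebraicity of the coefficients (Lemma~\ref{algebraic number}) and clearing denominators via Shimura's bounded-denominator principle to obtain $\mu F_t\in\mathcal{O}_K((q))$, the paper simply applies Treneer's result (Lemma~\ref{treneer-thm}, extended to $\Gamma_1$) as a black box to $\mu F_t$. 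Treneer's theorem itself then \emph{produces} the primes $r\equiv -1\pmod{4t^2Np^\nu}$ and the congruence for the $(r^3p^mn)$-th coefficient of $\mu F_t$, which is exactly $\mu\,\mathbf{t}_f(\chi_\Delta;r^3p^mn)$ when $\left(\frac{r^3p^mn}{t}\right)=-1$. No Hecke computation with $T_{r^2}$ is performed by hand, and the prime $r$ never enters the level.
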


This paper is organized as follows. In section 2 we recall basic facts on
real quadratic spaces and modular curves, and then define the twisted traces of CM values of a weakly holomorphic modular function on $\Gamma_0(N)$. In
section 3, to prove the main theorems, we define a theta kernel and a theta lift for a weakly holomorphic modular functions on $\Gamma_0(N)$. In section 4 and 5 we give the proofs of the main theorems.

 \section{\bf Preliminaries}\label{BFW}

 For basic facts on rational quadratic spaces and modular curves, we refer to \cite{BF} and \cite[Section 2]{CK} and follow notations in \cite{BF}. We consider a quadratic space $(V,q)$ over $\mathbb Q$ of signature $(1,2)$ given by $$V(\mathbb Q):=\{X\in M_2 (\mathbb Q)|\mathrm{tr}(X)=0\}$$
 with the associated quadratic form $q(X):=\det (X)$ and the bilinear form $(X,Y):=-{\rm tr}(XY)$.
 The group $SL_2(\mathbb Q)$ acts on $V$ by conjugation:
  $$ g\cdot X := g X g^{-1}$$
  for $X\in V$ and $g\in SL_2 (\mathbb Q)$. This orthogonal transformation gives rise to
  an isomorphism $G:=\mathrm{Spin}(V) \simeq SL_2$. We write $D:=G(\mathbb{R})/K$ for the associated
 orthogonal symmetric space, where $K=SO(2)$. Then, we have $D \simeq \mathbb{H}$, the upper half plane.
  We may regard $D$ as the space of positive lines in $V (\mathbb R)$, that
  is,
${D = \{ \text{span}(X) \subset V (\mathbb R) \, | \,
(X,X)>0
 \}}$ and can give the following identification of $D$
 with $\mathbb{H}$.  We pick as a base point of $D$ the line spanned by  $\mat 0&1\\-1&0 \emat$
 so that $K=SO(2)$ is its stabilizer in $G(\mathbb R)$. For $z=x+iy \in \mathbb{H}$, we choose $g_z\in
  G(\mathbb R)$ such that $g_z i= z$, where the action is the usual linear fractional transformation on
  $\mathbb{H}$.  We now have the isomorphism
  $\mathbb{H} \to D$ which assigns $z \in \mathbb{H}$ the positive line in $D$ spanned by
$$ X(z):=g_z \cdot \mat 0&1\\-1&0 \emat
     = \frac 1y \mat -\frac 12 (z+\bar{z}) & z\bar{z} \\
     -1& \frac 12 (z+\bar{z}) \emat. $$
Note that $q(X(z))=1$ and $g \cdot X(z)=X(gz)$ for $g\in
  G(\mathbb R)$.
  \par
To define CM points in $D$, we need the following set-up.
First, let $L\subset V(\mathbb{Q})$ be an even lattice of full
rank and write
  $L^\#$ for the dual lattice of $L$. If $\Gamma$ denotes a
  congruence subgroup of $\text{Spin}(L)$ which preserves $L$
  and acts trivially on the discriminant group $L^\#/L$, then the attached locally symmetric space
  $M:=\Gamma\backslash D$  is a modular curve, i.e., non-compact, as our space $V$ is isotropic over $\mathbb{Q}$.
The set $\text{Iso}(V)$ of all isotropic lines in $V$ corresponds
to $P^1 (\mathbb Q)=\mathbb Q \cup \{\infty\}$,
  the set of cusps of $G(\mathbb Q),$ via the bijective map
  $ \psi: P^1(\mathbb Q) \to \text{Iso}(V),$ which is defined by
   ${\psi((\alpha:\beta))=\text{span} \mat -\alpha \beta & \alpha^2 \\
   -\beta^2 & \alpha\beta \emat \in \text{Iso}(V). }$
  As $\psi$ commutes with the $G(\mathbb{Q})$-actions, that is,
  $\psi (g(\alpha : \beta))=g\cdot\psi ((\alpha:\beta))$ for $g\in G(\mathbb Q)$, the cusps
  of $M$, i.e., the $\Gamma$-classes of $P^1(\mathbb Q)$, can be
  identified with the $\Gamma$-classes of $\text{Iso}(V)$. In particular, the cusp
  $\infty\in P^1(\mathbb Q)$ is mapped to the isotropic line
  $\ell_0$ which is spanned by $X_0:=\mat 0&1\\0&0
  \emat$. We orient all lines $\ell\in \text{Iso}(V)$ by regarding
  $\sigma_\ell \cdot X_0$ as a positively oriented basis vector of
  $\ell$, where $\sigma_\ell \in SL_2(\mathbb Z)$ such that $\sigma_\ell \cdot\ell_0 = \ell$.
  For each isotropic line $\ell \in \text{Iso}(V)$, there exist positive rational numbers
  $\alpha_\ell$ and $\beta_\ell$ such that $\sigma_\ell^{-1} \Gamma_\ell \sigma_\ell =
    \left\{ \pm \mat 1 & k \alpha_\ell \\ 0& 1 \emat \,\Big|\, k\in \mathbb Z \right\},$
    where $\Gamma_\ell$ denotes the stabilizer of
the line $\ell$, and $\mat
  0&\beta_\ell\\0&0 \emat$ is a primitive element of
  $\ell_0 \cap \sigma_\ell^{-1} L$, respectively.
  Finally, we write $\varepsilon_\ell=\alpha_\ell/\beta_\ell$.
  Note that $\alpha_\ell$ is the width of the cusp $\ell$ of a congruence subgroup of $SL_2(\mathbb Z)$
  and the quantities
  $\alpha_\ell, \beta_\ell,$ and $\varepsilon_\ell$ only depend on the
  $\Gamma$-class of $\ell$.

 We denote the
  space of weakly holomorphic modular forms of weight $k$ on $\Gamma$ by $M_{k}^! (\Gamma).$
   Let us define CM points as, for $X\in V(\mathbb Q)$ of positive norm, i.e.,
  $q(X)>0$,
  $$ D_X = \text{span}(X) \in D.$$
  Note that the corresponding point in $\mathbb{H}$ satisfies a
  quadratic equation over $\mathbb{Q}$.  Since the stabilizer
$G_X$ of $X$ in $G(\mathbb{R})$
  is isomorphic to $SO(2)$ which is compact, $\Gamma_X=G_X\cap
  \Gamma$ is finite.  For $m\in \mathbb Q_{>0}$ and $h\in L^\#$, the group $\Gamma$ acts on
  $$ L_{h,m}=\{X\in L+h\,|\, q(X)=m \}$$
  with finitely many orbits. We define the {\it Heegner divisor}
  of discriminant $m$ on $M$ by
  $$ Z(h,m)= \sum_{X\in \Gamma\backslash L_{h,m}}
  \frac{1}{|\overline{\Gamma_X}|} D_X.$$
  On the other hand, for a vector $X\in V(\mathbb{Q})$ of negative norm, we define a
  geodesic $c_X$ in $D$ by
  $$ c_X=\{ z\in D\,|\, z \bot X \}. $$
  We note from \cite[Lemma 3.6]{Fu} that the case $X^\bot \subset V(\mathbb{Q})$
  is split over $\mathbb{Q}$ is equivalent to $q(X)\in -(\mathbb Q ^\times)^2$.
  In that case, the stabilizer $\overline{\Gamma}_X$ is
  trivial, the quotient $c(X):=\Gamma_X\backslash c_X$ in $M$ is an infinite geodesic, and
   $X$ is orthogonal to the two
  isotropic lines $\ell_X=\text{span}(Y)$ and
  $\tilde{\ell}_X=\text{span}(\tilde{Y})$, with $Y$ and $\tilde{Y}$
  positively oriented. We say $\ell_X$ is the line associated to
  $X$ if the triple $(X, Y, \tilde{Y})$ is a positively oriented
  basis for $V$, and we write $X\sim \ell_X$. Note
  $\tilde{\ell}_X=\ell_{-X}.$

  If  $m\in \mathbb{Q}_{>0}$ and $X\in L_{h,-m^2}$, then we can choose the orientation of $V$ such that
  $$\sigma^{-1}_{\ell}X=\left(
                         \begin{array}{cc}
                           m & r \\
                           0 & -m \\
                         \end{array}
                       \right)$$
  for some $r \in \mathbb{Q}$. The geodesic $c_X$ is given in $D\simeq \mathbb{H}$ by
  $$c_X=\sigma_{\ell_X}\{z\in \mathbb{H} \; |\; \Re(z)=-r/2m\}.$$ The number $-r/2m$ is called
  as the real part of the infinite geodesic $c(X)$ and denoted by $Re(c(X))$. We define
  $$\langle f, c(X) \rangle=-\sum_{n<0}a_{\ell_X}(n)e^{2 \pi i Re(c(X))n}-\sum_{n<0}a_{\ell_{-X}}(n)e^{2 \pi i Re(c(-X))n}.$$

From now on, we define the twisted traces of CM values of a weakly holomorphic modular function on $\Gamma_0(N)$. Let $N$ be a positive integer. Suppose that
$$L=\left\{ \left(\begin{matrix}
  b & 2c \\
  2aN & -b
\end{matrix}\right); a,b,c \in \mathbb{Z} \right\}$$
and $\Gamma=\Gamma_0(N)$. Let $\Delta \in \mathbb{Z}$ be a fundamental discriminant and
$r \in \mathbb{Z}$ such that $\Delta \equiv r \pmod{4N}$. Following the definition in \cite{GKZ}, we define a generalized genus character for $X=\left(\begin{smallmatrix}
  b & 2c \\
  2aN & -b \\
\end{smallmatrix}\right)\in L$ as follows:
$$\chi_{\Delta}(X)=\chi_{\Delta}([Na,b,c]).
$$
With these notations we define the twisted trace of a weakly holomorphic modular function $f$ on $\Gamma$.
\begin{dfn}\label{trace}
Suppose that $f(z)$ is a weakly holomorphic modular function on $\Gamma$. For a general
genus character $\chi_{\Delta}$ we define the twisted trace ${\bf t}_f(\chi_{\Delta};m)$ as follows:
\begin{enumerate}
\item If $m>0$ and $m \in \mathbb{Q}_{>0}$, then
\begin{equation}\label{2.1.1}
{\bf t}_f(\chi_{\Delta};m):=\sum_{X\in
\Gamma\backslash L_{0,m}}
    \frac{\chi_{\Delta}(X)}{|\overline{\Gamma_X}|} f(D_X).
\end{equation}
\item  If $m=0$, or $m<0$ such that $m\not \in - (\mathbb{Q}^{\times})^2$, then
\begin{equation}\label{2.1.2}
{\bf t}_f(\chi_{\Delta};m):=0.
\end{equation}
\item If $m<0$ and $m \in - (\mathbb{Q}^{\times})^2$, then
\begin{equation}\label{2.1.3}
{\bf t}_f(\chi_{\Delta};m):=\sum_{X\in
\Gamma\backslash L_{0,m}}\chi_{\Delta}(X)\langle f,c(X)\rangle.
\end{equation}
\end{enumerate}
\end{dfn}
\begin{rem}
Note that the definition of twisted traces for positive index $m$ in (\ref{2.1.1}) is the same as the definition in (\ref{first}).
\end{rem}

\section{Twisted theta kernels}
Suppose that $N$ is a positive integer and $\Gamma=\Gamma_0(N)$.
In \cite{K}, Kudla constructed a Green function $\xi^{0}$ associated
to a Poincare dual form $\varphi^0(X,z)$ for the Heegner point
$D_X$.  We recall the construction of $\xi^0$. Let
$$\Ei(w)=\int_{-\infty}^{w} \frac{e^t}{t}dt,$$ where the path of
integration lies in the along the positive real axis (see
\cite{A-S}). For $X \in V(\mathbb{R})$, $X\neq0$, we define
\begin{equation}\label{xi}
\xi^0(X,z)=-\Ei(-2 \pi R(X,z)),
\end{equation}
where $$R(X,z)=\frac{1}{2}(X,X(z))^2-(X,X).$$ It is known that $\Ei(\xi^0(X,z))$ is
a smooth function on $D\setminus D_X$. For $q(X)>0$, the function
$\xi^0(X,z)$ has logarithmic growth at the point $D_X$, while it is
smooth on $D$ if $q(X) \leq 0$. Moreover, if $X \neq 0$, then away
from the point $D_X$
$$\frac{1}{2 \pi i}\cdot
 \overline{\partial} \partial \xi^0(X,z)=\varphi(X,z).$$ For $\tau=u+iv \in \mathbb{H}$, let $$\varphi(X,\tau,z)=e^{2 \pi i
q(X)\tau}\varphi^0 (\sqrt{v}X,z).$$
We define a theta kernel $\theta_{\Delta}$ by
\begin{align}
\theta_{\Delta}(\tau, z, \varphi)&=\sum_{X\in \; L} \chi_{\Delta}(X)\varphi(X,\tau,z).
\end{align}
Since $\chi_{\Delta}$ is invariant under the action of $\Gamma_0(N)$ and $$\varphi^{0}(g.\sqrt{v}X,gz)=\varphi^{0}(\sqrt{v}X,z),$$
we have for $g\in \Gamma_0(N)$
$$\theta_{\Delta}(\tau, g z, \varphi)=\theta_{\Delta}(\tau, z, \varphi).$$

\begin{prop}\label{kernel} The theta kernel $\theta_{\Delta}(\tau, z ,\varphi)$
 is a non-holomorphic modular form of weight $3/2$ with values in
$\Omega^{1,1}(M)$ on a congruence subgroup $\Gamma_1(4 N)$. For each
cusp $\ell$ we have
$$\theta_{\Delta}(\tau, \sigma_{\ell}z, \varphi)=O(e^{-cy^2}) \text{ as } y
\rightarrow \infty,$$ uniformly in $x$, for some constant $C>0$.
\end{prop}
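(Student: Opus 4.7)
The strategy is to bootstrap off the modularity of the untwisted Kudla--Millson theta kernel, push the twisting character $\chi_{\Delta}$ through by a Gauss-sum/sublattice identity, and then extract the cuspidal decay by showing that $\chi_{\Delta}$ kills the isotropic contributions.

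First, from \cite{K} and \cite{BF} the untwisted theta kernel
$$\theta_{L+h}(\tau,z,\varphi)=\sum_{X\in L+h}\varphi(X,\tau,z),\qquad h\in L^{\#}/L,$$
assembles (as $h$ ranges over the discriminant group) into a vector-valued non-holomorphic modular form of weight $3/2$ in $\tau$ valued in $\Omega^{1,1}(M)$, transforming under $\mathrm{SL}_{2}(\mathbb Z)$ by the Weil representation attached to $L^{\#}/L$. For the lattice of Section~\ref{BFW} this translates into scalar modularity of each component on a congruence subgroup of level dividing $4N$. The $z$-invariance under $\Gamma_{0}(N)$ of $\theta_{\Delta}$ is already recorded in the paragraph preceding the proposition, so what remains is the $\tau$-variable transformation law.

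Following \cite{GKZ}, the genus character $\chi_{\Delta}([Na,b,c])$ depends only on the class of $(a,b,c)$ modulo a sublattice of $L$ of index dividing a power of $|\Delta|$, and can be written as a finite linear combination of additive characters in the coefficients of $X$ together with Gauss sums. Substituting this expression into the definition of $\theta_{\Delta}$ regroups the series into a finite $\mathbb{Q}$-linear combination of Weil components of the untwisted theta kernel attached to the rescaled lattice $L(\Delta)$ (quadratic form $|\Delta|\cdot q$). Each component is modular of weight $3/2$ by the previous step, and the Gauss-sum factors assemble so as to trivialize the resulting character on $\Gamma_{1}(4N)$. This yields the claimed transformation law.

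For the decay at a cusp $\ell$, I would apply $\sigma_{\ell}$ and unfold along $\sigma_{\ell}^{-1}L\cap\ell_{0}$. Since $\varphi^{0}(\sqrt{v}X,z)$ is dominated by a Gaussian in $v\,(X,X(z))^{2}$, every anisotropic $X$ contributes $O(e^{-cy^{2}})$ uniformly in $x$. The only terms in $\theta_{L}$ that decay merely polynomially at the cusp come from isotropic vectors $X\in\sigma_{\ell}^{-1}\ell_{0}$; but the very definition of $\chi_{\Delta}$ forces $\chi_{\Delta}(X)=0$ on such $X$ for a nontrivial fundamental discriminant $\Delta$, because the condition that $(b^{2}-4Nac)/\Delta$ be a square modulo $4N$, combined with $\gcd(a,b,c,\Delta)=1$, cannot be met when $b^{2}-4Nac=0$. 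The isotropic contribution to $\theta_{\Delta}$ therefore vanishes identically, and only the Gaussian tails remain, giving $\theta_{\Delta}(\tau,\sigma_{\ell}z,\varphi)=O(e^{-cy^{2}})$.

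The main obstacle will be the bookkeeping in the second step: organizing the Weil-representation expansion of $\theta_{\Delta}$ carefully enough to verify that the transformation is trivial on $\Gamma_{1}(4N)$ rather than on a larger group carrying a nontrivial character, so that all the $\Delta$-dependence is absorbed into the level and the Gauss-sum normalization. The other steps are either cited directly from \cite{BF} or reduce to a routine unfolding combined with the character-vanishing observation above.
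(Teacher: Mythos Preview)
Your overall structure matches the paper's: both write $\theta_{\Delta}$ as a finite $\chi_{\Delta}$-weighted combination of theta kernels $\theta_{h}$ attached to cosets of a $\Delta$-scaled sublattice, and then import modularity and decay from the untwisted theory. Two points, however, diverge.

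\textbf{The decay argument is not correct.} You assert that $\chi_{\Delta}(X)=0$ for isotropic $X$ because ``the condition that $(b^{2}-4Nac)/\Delta$ be a square modulo $4N$, combined with $\gcd(a,b,c,\Delta)=1$, cannot be met when $b^{2}-4Nac=0$.'' But $0/\Delta=0$ \emph{is} a square modulo $4N$, and for primitive isotropic vectors the coprimality condition is generically satisfied; indeed, the paper itself (in the proof of Proposition~\ref{4.2}) uses that $n\mapsto\chi_{\Delta}(nX_{\ell})$ is a genuine Dirichlet character on the isotropic line, not the zero map. So the isotropic terms are not killed by the twist. In the paper the decay $O(e^{-cy^{2}})$ is inherited wholesale from Funke's result for each untwisted $\theta_{h}$: it is a property of the Kudla--Millson Schwartz form and the lattice sum (via Poisson summation along the isotropic direction), not a consequence of the genus character. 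Your unfolding sketch is therefore aiming at the wrong mechanism.

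\textbf{The level reduction is done differently.} Where you hope that ``Gauss-sum factors assemble so as to trivialize the resulting character on $\Gamma_{1}(4N)$,'' the paper instead observes that $\chi_{\Delta}(X)=0$ unless $\Delta\mid q(X)$, so the Fourier expansion of $\theta_{\Delta}$ is supported on multiples of $\Delta$; then it uses the identity $\Theta(\Delta\tau)\cdot\bigl(\theta_{\Delta}|U_{\Delta}|V_{\Delta}\bigr)=\Theta(\Delta\tau)\theta_{\Delta}$, an argument of Li on integral-weight forms applied to $\Theta\cdot\theta_{\Delta}$, the nonvanishing of $\Theta$ on $\mathbb{H}$, and the obvious $\tau\mapsto\tau+1$ invariance to descend the level to $\Gamma_{1}(4N)$. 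That is a concrete route you can follow; the Gauss-sum reorganization you propose is plausible in spirit but, as you note yourself, would need real work to pin down the exact level.
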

\begin{proof}
Take $$L_{\Delta}:=\left\{
\Delta\left(\begin{matrix}
  b & 2c \\
  2aN & -b
\end{matrix}\right); a,b,c \in \mathbb{Z} \right\}.
$$
For $h \in L_{\Delta}$ let
$$\theta_h(\tau, z, \varphi)=\sum_{X\in L_{\Delta}+h} \varphi(X,\tau,z).$$
Since $\chi_{\Delta}(X)$ depends on $X \in L$ modulo $L_{\Delta}$, we have
\begin{equation}\label{theta-chi}
\theta_{\Delta}(\tau, z, \varphi)=\sum_{h\in L/L_{\Delta}}\chi_{\Delta}(h)\sum_{X\in L_{\Delta}} \varphi(X,\tau,z)=\sum_{h\in L/L_{\Delta}}\chi_{\Delta}(h)\theta_h(\tau, z, \varphi).
\end{equation}
It is known that $\theta_h(\tau, z, \varphi)$ is a non-holomorphic modular form of weight $3/2$ with values in $\Omega^{1,1}(M)$ for congruence subgroup $\Gamma(\Delta N)$, and that
for each cusp $\ell$ we have
$$\theta_h(\tau, \sigma_{\ell}z, \varphi)=O(e^{-cy^2}) \text{ as } y
\rightarrow \infty,$$ uniformly in $x$, for some constant $C>0$ (see \cite{Fu}, Proposition 4.1).

Note that if $\Delta \nmid q(X)$, then $\chi_{\Delta}(X)=0$. This implies that
$$\theta_{\Delta}(\tau, z, \varphi)=\sum_{n \in \mathbb{Z}}\sum_{q(X)=n\Delta}\chi_{\Delta}(X)\varphi^{0}(\sqrt{v}X,z)e^{2 \pi i \Delta n \tau}.$$ Let $\Theta(\tau):=1+2\sum_{n=1}^{\infty}q^{n^2}$. Note that the function $\Theta(\tau)$ is
a modular form of weight $1/2$ on $\Gamma_0(4)$.
For a positive integer $m$ and a function $f(\tau)$ on $\mathbb{H}$, we define operators $U_m$ and $V_m$ by
$$f(\tau)|U_m=\frac{1}{m}\sum_{j=0}^{m-1}f\left(\frac{\tau+j}{m}\right) \text{ and } f(\tau)|V_{m}=f(m\tau).$$
Then $$\theta_{\Delta}(\tau, z, \varphi)|U_{\Delta}=\sum_{n \in \mathbb{Z}}\sum_{q(X)=n\Delta}\chi_{\Delta}(X)\varphi^{0}(\sqrt{v/\Delta}X,z)e^{2 \pi i  n \tau}$$
(see Chapter I in \cite{O} for details of operators $U_m$ and $V_m$).Thus, we have
$$\Theta(\Delta\tau)\theta_{\Delta}(\tau, z, \varphi)|U_{\Delta}|V_{\Delta}=\Theta(\Delta\tau)\theta_{\Delta}(\tau, z, \varphi),$$
Following the argument of Lemma 4 in \cite{Li}, we have that
$\Theta(\tau)\theta(\tau, z, \varphi)$ is a non-holomorphic modular form
of weight $2$ on $\Gamma(4 N)$.
Note that $\Theta(\tau)$ is nowhere-vanishing on $\mathbb{H}$ and
$$\theta_{\Delta}(\tau+1, z, \varphi)=\theta_{\Delta}(\tau, z, \varphi).$$ Thus, we complete the proof.
\end{proof}
We define a theta lift of $f$ by
\begin{equation}\label{t-lift}
I_{\Delta}(\tau,f)=\int_{M}f(z)\theta_{\Delta}(\tau,z ,\varphi).
\end{equation}
Proposition \ref{kernel} implies the convergence of the integral (\ref{t-lift}). Thus, we have
$I_{\Delta}(\tau,f)$ is a (in general non-holomorphic) modular form of weight $3/2$ on $\Gamma_1(N)$.

\section{Proof of Theorem \ref{main1}}
Suppose that $N$ is a positive integer and $\Gamma=\Gamma_0(N)$.
By determining the Fourier expansion of $I_{\Delta}(\tau,f)$, we prove that the generating series for the twisted
traces of CM values of a weakly holomorphic modular function on $\Gamma_0(N)$ is given by the holomorphic part of a harmonic weak Maass form of
weight 3/2. Note that we have
\begin{align*}
I_{\Delta}(\tau,f)&=\int_{M}f(z)\sum_{X\in L} \chi_{\Delta}(X)\varphi^{0}(\sqrt{v}X,z)e^{2 \pi i q(X)\tau}\\
&=\sum_{m\in \mathbb{Z}} \int_{M}\sum_{X\in L_{0,m}}\chi_{\Delta}(X)f(z)\varphi^{0}(\sqrt{v}X,z)e^{2 \pi i q(X)\tau},
\end{align*}
where  $$L_{0,m}=\{X\in L\,|\, q(X)=m \}.$$
If $m\neq0$, then, since $\Gamma\setminus L_{0,m}$ is finite, we have
\begin{align*}
\int_{M}\sum_{X\in L_{0,m}}&\chi_{\Delta}(X)f(z)\varphi^{0}(\sqrt{v}X,z)e^{2 \pi i q(X)\tau}\\
&=\sum_{X\in \Gamma\setminus L_{0,m}} \chi_{\Delta}(X) \int_{M} \sum_{\gamma \in \Gamma_X\setminus \Gamma}f(z) \varphi^{0}(\sqrt{v}X,\gamma z)e^{2 \pi i q(X)\tau}.
\end{align*}
In the following proposition, we determine the $m$th Fourier coefficient of $I_{\Delta}(\tau,f)$
for $m \neq 0$.
\begin{prop}{\cite{BF}}\label{4.1}
Let $X\in L_{0,m}$ and $\Gamma=\Gamma_0(N)$. Then we have the followings:
\begin{enumerate}
\item If $m>0$ and  $X \in L_{0,m}$, then
$$\int_{M}\sum_{\Gamma_X\backslash\Gamma}f(z)\varphi^0(\sqrt{v}X,\gamma z)=\frac{1}{|\overline{\Gamma_0(N)_X}|}f(D_X).$$
\item If $m<0$ and $m\not \in -(\mathbb{Q}^{\times})^2$ and $X\in L_{0,m}$, then
$$\int_{M}\sum_{\Gamma_X\backslash\Gamma}f(z)\varphi^0(\sqrt{v}X,\gamma z)\in L^1(M)$$ and
$$\int_{M}\sum_{\Gamma_X\backslash\Gamma}f(z)\varphi^0(\sqrt{v}X,\gamma z)=0.$$
\item If $m<0$ and $m \in -(\mathbb{Q}^{\times})^2$, then
$$\int_{M}\sum_{\Gamma_X\backslash\Gamma}f(z)\varphi^0(\sqrt{v}X,\gamma z)\in L^1(M)$$ and $$\int_{M}\sum_{\Gamma_X\backslash\Gamma}f(z)\varphi^0(\sqrt{v}X,\gamma z)=(a_{\ell_{X}}(0)+a_{\ell_{-X}}(0))\frac{1}{8\pi\sqrt{v}m}\beta(4\pi m)+\langle f,c(X)\rangle.$$
\end{enumerate}
\end{prop}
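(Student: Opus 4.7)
The plan is to follow the strategy of Bruinier and Funke \cite{BF}, since the twisting factor $\chi_{\Delta}(X)$ is constant in $z$ and does not affect the local geometric analysis; once the unfolding trick is applied, the proof reduces to the untwisted case already treated there. The first common step is to exchange summation and integration and then unfold:
\[
\int_{\Gamma\backslash D}\sum_{\gamma\in\Gamma_X\backslash\Gamma} f(z)\,\varphi^0(\sqrt{v}X,\gamma z)
=\int_{\Gamma_X\backslash D} f(z)\,\varphi^0(\sqrt{v}X,z),
\]
using the $\Gamma$-invariance of $f$ to absorb the $\gamma$-translate. It remains to analyze this single integral in each of the three regimes.

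In case (1), where $q(X)=m>0$, the stabilizer $\Gamma_X$ is finite, and $\varphi^0(\sqrt{v}X,z)$ is Kudla's Poincar\'e dual current for the divisor $D_X$. Using the identity $\frac{1}{2\pi i}\bar\partial\partial\,\xi^0(X,z)=\varphi(X,z)$ on $D\setminus D_X$, I would apply Stokes' theorem on the complement of an $\epsilon$-disk around $D_X$ and let $\epsilon\to 0$. The holomorphy of $f$ kills the bulk contribution, and the logarithmic singularity of $\xi^0$ at $D_X$ contributes exactly $\frac{1}{|\overline{\Gamma_X}|}f(D_X)$ from the residue.

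In case (2), with $q(X)<0$ and $q(X)\notin-(\mathbb{Q}^\times)^2$, the form $\varphi^0(\sqrt{v}X,z)$ is smooth on all of $D$ and decays rapidly at every cusp of $M$, fast enough to dominate the polar growth of the weakly holomorphic $f$, which gives the $L^1$ assertion. Moreover no geodesic $c_X$ reaches a cusp of $M$ (since $X^\perp$ is anisotropic over $\mathbb{Q}$), so a Stokes argument using $\xi^0$ as a potential produces no boundary term at infinity and yields the vanishing.

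Case (3) carries all the technical weight. Here $X^\perp$ is split and $c_X$ connects the cusps $\ell_X$ and $\ell_{-X}$. The plan is to truncate $M$ at height $T$, apply Stokes via $\xi^0$, and analyze the resulting horocycle integrals near $\ell_X$ and $\ell_{-X}$. Inserting the Fourier expansions of $f$ at each cusp and using the known asymptotics of $\Ei(\,\cdot\,)$ along $c_X$ produces the pairing $\langle f,c(X)\rangle$ from the principal parts, while the zero Fourier coefficients $a_{\ell_{\pm X}}(0)$ assemble into the $\beta(4\pi m)$-term before letting $T\to\infty$. The main obstacle is precisely this boundary computation: one must justify the interchange of limit and sum, and match the explicit constant $\frac{1}{8\pi\sqrt{v}m}$ coming from the tail of $\Ei$. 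All of these steps are carried out in \cite{BF}, and the factor $\chi_{\Delta}(X)$, being $\Gamma$-invariant and $z$-independent, passes through each step unchanged.
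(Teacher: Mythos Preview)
Your outline is correct, and in fact goes further than the paper itself: the paper does not supply any proof of this proposition but simply cites \cite{BF} in the statement header and moves on. Your sketch accurately reproduces the Bruinier--Funke argument (unfold, then apply Stokes' theorem with the Green function $\xi^0$, handling the three regimes $m>0$, $m<0$ nonsquare, and $m\in -(\mathbb{Q}^\times)^2$ separately), and your observation that the twist $\chi_\Delta(X)$ is constant in $z$ and $\Gamma$-invariant, hence passes through unchanged, is exactly why the paper can quote the result directly from \cite{BF} without modification.
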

Recall that $\alpha_\ell$ is the width of the cusp $\ell$
of $\Gamma$, and that
  $\sigma_\ell \in SL_2(\mathbb Z)$ transforms the infinite cusp to $\ell$.
Then $f$ has a Fourier expansion at the cusp
$\ell$ of the form
\begin{equation}\label{fourier}f(\sigma_\ell z) = \sum_{n\in \frac{1}{\alpha_\ell}\mathbb Z}
  a_\ell (n) e(nz)\end{equation}
  with $a_\ell (n) = 0$ for $n\ll 0$.
\begin{prop}\label{4.2} If $m=0$, then we have
\begin{align*}
\int_{M} \sum_{X \in \; L_{0,0}}f(z)
\chi_{\Delta}(X)\varphi^{0}(\sqrt{v}X,z)=0.
\end{align*}
\end{prop}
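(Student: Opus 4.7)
My approach uses the splitting \eqref{theta-chi} to reduce to contributions of untwisted Siegel theta kernels at cusps, where I can apply the method of \cite{BF} and then extract cancellation from the character sum in $\chi_\Delta$.

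First, I dispose of $X=0$: since $\gcd(0,0,0,\Delta)=|\Delta|>1$, one has $\chi_\Delta(0)=0$, so this term contributes nothing. For the remaining sum over nonzero isotropic vectors, the uniform decay $O(e^{-cy^2})$ at cusps from Proposition \ref{kernel} justifies inserting \eqref{theta-chi} inside the $M$-integral and exchanging sum and integration, yielding
\[\sum_{h\in L/L_\Delta}\chi_\Delta(h)\int_M f(z)\!\sum_{\substack{X\in L_\Delta+h\\ q(X)=0,\,X\neq 0}}\!\varphi^0(\sqrt vX,z).\]

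For each fixed $h$, I would decompose the inner sum by $\Gamma$-orbits of nonzero isotropic $X\in L_\Delta+h$; each orbit is stabilised by the parabolic subgroup fixing an isotropic line $\ell$. Unfolding against $f$ reduces the $\ell$-contribution to an integral over a neighbourhood of the cusp $\sigma_\ell\infty$, computable via the Fourier expansion \eqref{fourier} of $f$ at $\sigma_\ell z$; this is precisely the untwisted calculation of Bruinier--Funke. The resulting output for the cusp $\ell$ depends on $h$ only through $h\bmod\mathbb Z\cdot X_\ell^{\text{prim}}$, where $X_\ell^{\text{prim}}$ is a primitive isotropic vector in $\ell\cap L$.

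Swapping the orders of summation, the contribution of each cusp $\ell$ reduces to a character sum of the shape $\sum_k\chi_\Delta(kX_\ell^{\text{prim}})$, where $k$ runs over a complete residue system modulo $\Delta$. I would then identify the restriction of $\chi_\Delta$ to the line $\mathbb Z\cdot X_\ell^{\text{prim}}$ with the Kronecker symbol $(\Delta/\,\cdot\,)$, up to a sign coming from the orientation of $\ell$; since $|\Delta|>1$, this is a nontrivial Dirichlet character and its complete residue sum is $0$. The main obstacle is the last step: unwinding the generalised genus character from \cite{GKZ} on a singular form $[Na,b,c]$ with $b^2=-4Nac$ and reconciling its sign with the orientation of the isotropic line $\ell$ requires a careful case analysis, especially at cusps other than $\infty$ where the primitive isotropic vector is obtained by conjugating by $\sigma_\ell\in SL_2(\mathbb Z)$ and the lattice data no longer have the simple form used at $\infty$.
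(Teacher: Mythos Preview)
Your outline has the two essential ingredients the paper also uses: $\chi_\Delta(0)=0$ disposes of $X=0$, and the fact (from \cite[I.2]{GKZ}) that $n\mapsto\chi_\Delta(nX_\ell)$ is a Dirichlet character on each isotropic line. The gap is in your middle step. After inserting \eqref{theta-chi} you propose, for each fixed $h$, to unfold the inner integral using a $\Gamma$-orbit decomposition of $\{X\in L_\Delta+h:\,q(X)=0\}$. But $\Gamma_0(N)$ does not preserve the individual coset $L_\Delta+h$; it permutes these cosets (which is precisely why $\theta_\Delta$ is $\Gamma_0(N)$-invariant in $z$ while $\theta_h$ is not). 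So there is no $\Gamma$-orbit decomposition inside a single coset, the inner integrand does not descend to $M$, and you cannot invoke the Bruinier--Funke computation ``per $h$''. You could repair this by first grouping the $h$'s into $\Gamma_0(N)$-orbits, using $\Gamma$-invariance of $\chi_\Delta$ to pull the character out, and only then unfolding; but at that point you have abandoned the coset splitting and are doing the paper's argument.

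The paper keeps the twisted sum intact throughout. It decomposes the nonzero isotropic vectors of $L$ by their line $\ell$, pulls $\chi_\Delta$ outside the sum over $\Gamma_\ell\backslash\Gamma$ by $\Gamma$-invariance, and applies Stokes' theorem with the Green function $\xi^0$ to convert the regularized integral into a boundary term. After an explicit computation this becomes, up to constants, $\frac{a_\ell(0)}{T}\sum_{n}\chi_\Delta(nX_\ell)e^{-\pi v(n\beta_\ell)^2/T^2}$, and the vanishing as $T\to\infty$ is obtained by \emph{twisted Poisson summation}: on the dual side one gets a Gaussian $e^{-cw^2T^2}$ weighted by $\chi_\Delta(wX_\ell)$, whose $w=0$ term is absent since $\chi_\Delta(0)=0$. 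Note that the paper never needs to identify the restriction of $\chi_\Delta$ with a specific Kronecker symbol or to analyse the cusp-by-cusp signs you flag as an obstacle; it only needs that the restriction is a Dirichlet character so that twisted Poisson applies.
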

\begin{proof}
Let $Q_{\ell}=e(\sigma^{-1}_{\ell}z/\alpha_{\ell})$ and $D_{1/T}=\left\{w\in\mathbb{C}\; | \; 0<|W|<\frac{1}{2 \pi T}\right\}$ for $T>0$. We truncate $M$ by setting
$$M_{T}=M-\coprod_{\ell\setminus Iso(V)}Q^{-1}_{\ell}D_{1/T}.$$ The regularized integral
$\int_{M}^{reg} \sum_{X \in \; L_{0,0}}f(z)
\chi_{\Delta}(X)\varphi^{0}(\sqrt{v}X,z)$ is defined by
$$\int_{M}^{reg} \sum_{X \in \; L_{0,0}}f(z)
\chi_{\Delta}(X)\varphi^{0}(\sqrt{v}X, z)=\lim_{T\rightarrow0}\int_{M_T} \sum_{X \in \; L_{0,0}}f(z)
\chi_{\Delta}(X)\varphi^{0}(\sqrt{v}X, z).$$

Let $X_{\ell}$ be the primitive positive oriented vector in $L \cap \ell$. Note that $\chi_{\Delta}(\left(\begin{smallmatrix}
0 & 0\\
0 & 0\\
\end{smallmatrix}\right))=0$. Thus, we have by Stokes Theorem
\begin{align*}
\int_{M} \sum_{X \in \; L_{0,0}}f(z)
\chi_{\Delta}(X)&\varphi^{0}(\sqrt{v}X,z)=\int^{reg}_{M}f(z)\sum_{X\in L_{0,0}}\chi_{\Delta}(X)\varphi^0(\sqrt{v}X,z)\\
&=\int^{reg}_{M}f(z)\sum_{\sm \ell\in \Gamma \setminus Iso(V)\\
L_{0,0}\cap \ell \neq 0\esm}\sum_{ X\in \ell \; \cap L_{0,0}}\chi_{\Delta}(X)
\sum_{\gamma \in \Gamma_{\ell}\setminus \Gamma}\varphi^0(\sqrt{v}\gamma^{-1}X,z)\\
&=\sum_{\sm \ell\in \Gamma \setminus Iso(V)\\
L_{0,0}\cap \ell \neq 0\esm}\sum_{\gamma \in \Gamma_{\ell}\setminus \Gamma}
\int^{reg}_{M}f(z)\sum_{n=-\infty}^{\infty}\chi_{\Delta}(n X_{\ell})\varphi^0(n\sqrt{v}X_{\ell},\gamma z)\\
&=\frac{1}{2 \pi i} \sum_{\sm \ell\in \Gamma \setminus Iso(V)\\
L_{0,0}\cap \ell \neq 0\esm}\lim_{T\rightarrow0} \int_{\partial M_T}f(z)\sum_{\gamma \in \Gamma_{\ell}\setminus \Gamma} \sum_{n=-\infty}^{\infty}\chi_{\Delta}(n X_{\ell})\partial \xi^0(n\sqrt{v}X_{\ell},\gamma z).
\end{align*}
Let $X^0_{\ell}=\left(\sm 0 & \beta_{\ell} \\ 0 & 0 \esm \right)$.
We have by (\ref{xi})
$$\partial \xi^0(\sqrt{v}X^0_{\ell},g z)=\frac{-i}{(cz+d)^2 Im(gz)}e^{-\pi v r^2/Im(gz)^2}dz$$
for $g=\left(\sm a&b \\ c&d \esm \right) \in SL_2(\mathbb{R})$.
This implies that there is $\delta>0$ such that
$$|\partial \xi^0(\sqrt{v}X^0_{\ell},g z)|\ll e^{-\delta y^2}dz$$
for all $g=\left(\sm a&b \\ c&d \esm \right) \in SL_2(\mathbb{R})$ with $c \neq 0$, uniformly for $y>1$.
Thus, we have
\begin{align*}
&\frac{1}{2 \pi i} \sum_{\sm \ell\in \Gamma \setminus Iso(V)\\
L_{0,0}\cap \ell \neq 0\esm}\lim_{T\rightarrow0} \int_{\partial M_T}f(z)\sum_{\gamma \in \Gamma_{\ell}\setminus \Gamma} \sum_{n=-\infty}^{\infty}\chi_{\Delta}(n X_{\ell})\partial \xi^0(n\sqrt{v}X_{\ell},\gamma z)\\
&=\frac{-1}{2 \pi i} \sum_{\sm \ell\in \Gamma \setminus Iso(V)\\
L_{0,0}\cap \ell \neq 0\esm}\sum_{\ell' \in \Gamma \setminus Iso(V)}\lim_{T\rightarrow0} \int_{z=iT}^{\alpha_{\ell'}+iT}f(\sigma_{\ell'} z)\sum_{\gamma \in \Gamma_{\ell}\setminus \Gamma} \sum_{n=-\infty}^{\infty}\chi_{\Delta}(n X_{\ell})\partial \xi^0(n\sqrt{v}X^0_{\ell},\sigma_{\ell}^{-1}\gamma\sigma_{\ell'}z)\\
&=\frac{-1}{2 \pi i} \sum_{\sm \ell\in \Gamma \setminus Iso(V)\\
L_{0,0}\cap \ell \neq 0\esm}\lim_{T\rightarrow0} \int_{z=iT}^{\alpha_{\ell}+iT}f(\sigma_{\ell'} z) \sum_{n=-\infty}^{\infty}\chi_{\Delta}(n X_{\ell})\partial \xi^0(n\sqrt{v}X^0_{\ell},z)\\
&=\frac{1}{2 \pi } \sum_{\sm \ell\in \Gamma \setminus Iso(V)\\
L_{0,0}\cap \ell \neq 0\esm}\lim_{T\rightarrow0} \int_{z=iT}^{\alpha_{\ell}+iT}f(\sigma_{\ell'} z) \sum_{n=-\infty}^{\infty}\chi_{\Delta}(n X_{\ell})
\frac{1}{y}e^{-\pi v (n  \beta_{\ell})^2/y^2}dx\\
&=\frac{1}{2 \pi } \sum_{\sm \ell\in \Gamma \setminus Iso(V)\\
L_{0,0}\cap \ell \neq 0\esm}\lim_{T\rightarrow0}\frac{\alpha_{\ell}}{2 \pi} a_{\ell}(0) \sum_{n=-\infty}^{\infty}\chi_{\Delta}(n X_{\ell})
\frac{1}{T}e^{-\pi v (n  \beta_{\ell})^2/T^2}dx.
\end{align*}
Note that for a fixed $X_{\ell}$ we have $\chi_{\Delta}(nX_{\ell})$ is a Dirichlet character (see I.2 in \cite{GKZ}). Let $R$ be the conductor of $\chi_{\Delta}(nX_{\ell})$ and
$$ g(\chi_{\Delta}(nX_{\ell}))=\sum_{n\mod{R}} \chi_{\Delta}(n)e^{2 \pi i n/R}.$$
Using twisted Poisson summation formula (see the formula (1.10) in \cite{B}), we have
\begin{align*}
&\int^{reg}_{M}f(z)\sum_{X\in L_{0,0}}\chi_{\Delta}(X)\varphi^0(\sqrt{v}X,z)\\
&=\frac{\chi_{\Delta}(-X)g(\chi_{\Delta}(nX_{\ell}))}{R}\cdot\frac{a_{\ell}(0)(\alpha_{\ell}/\beta_{\ell})}
{2\pi \sqrt{v}}\lim_{T\rightarrow\infty}\sum_{w=-\infty}^{\infty}\chi_{\Delta}(wX)e^{-\pi w^2 T^2/(v R^2 \beta_{\ell}^2)}=0.
\end{align*}
This completes the proof.
\end{proof}

Using Proposition 4.7 in \cite{BF}, we immediately obtain by (\ref{theta-chi}) that ${\bf t}_f(\chi_{\Delta},-m^2)$ is zero for large $m > 0$.
\begin{prop}\label{4.3}
Suppose that $f(z)$ is a weakly holomorphic modular function on $\Gamma_0(N)$ having the Fourier expansion as in
(\ref{fourier}). Then
$${\bf t}_f(\chi_{\Delta},-m^2)=0 \;\;\; \text{ for } m \gg 0.$$
\end{prop}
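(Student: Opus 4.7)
The plan is to reduce the statement to its untwisted counterpart, Proposition 4.7 of \cite{BF}, by means of the coset decomposition (\ref{theta-chi}). Integrating that identity against $f(z)$ over $M$ yields
$$I_\Delta(\tau, f) = \sum_{h \in L/L_\Delta} \chi_\Delta(h)\, I_h(\tau, f),$$
where $I_h(\tau, f) := \int_M f(z)\, \theta_h(\tau, z, \varphi)$ is an untwisted theta lift of $f$ attached to the rescaled sublattice $L_\Delta$ and the coset $h$. Each $I_h(\tau, f)$ therefore falls squarely inside the Bruinier--Funke framework for $L_\Delta$, and the twisted lift $I_\Delta(\tau, f)$ is a finite $\chi_\Delta$-weighted linear combination of these untwisted pieces.

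Next I would compare Fourier coefficients at index $-m^2$. Using Proposition \ref{4.1}(3), the contribution to $I_\Delta(\tau, f)$ coming from $L_{0,-m^2}$ splits into a non-holomorphic part, which is a multiple of the incomplete gamma factor $\beta(-4\pi m^2 v)$ and therefore lies entirely in the non-holomorphic part of the resulting harmonic Maass form, and a genuinely holomorphic part whose $q^{-m^2}$-coefficient is precisely ${\bf t}_f(\chi_\Delta, -m^2)$. The same identification holds term-by-term for each $I_h(\tau, f)$ in the untwisted sense on $L_\Delta$. Hence it suffices to show that the $q^{-m^2}$-coefficient of the holomorphic part of each $I_h(\tau, f)$ vanishes for $m$ large.

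This is exactly what Proposition 4.7 of \cite{BF} provides: for each fixed coset $h$ the $q^{-m^2}$-coefficient of the holomorphic part of $I_h(\tau, f)$ vanishes once $m$ exceeds a threshold determined by the orders of the principal parts of $f$ at the cusps of $M$ (and on $h$ through the width and orientation data of those cusps as seen by the sublattice $L_\Delta$). Since $L/L_\Delta$ is finite, I take $m$ larger than all of these finitely many thresholds; the vanishing then propagates through the linear combination and gives ${\bf t}_f(\chi_\Delta, -m^2) = 0$ as claimed.

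The main obstacle I anticipate is verifying cleanly that each $\theta_h$ qualifies as a theta kernel for $L_\Delta$ in the precise sense required by \cite{BF}, so that their Proposition 4.7 applies verbatim. Concretely, this amounts to tracking the level $\Delta N$, the discriminant form of $L_\Delta$, and the cusp data (widths $\alpha_\ell$, primitive vectors of length $\beta_\ell$, and orientations) relative to $L_\Delta$ rather than $L$; one must also check that weak holomorphicity of $f$ on $\Gamma_0(N)$ suffices to control the principal parts that enter the BF bound uniformly in $h$. Once this identification is in place, the remaining argument is a purely formal finite linear combination and introduces no further analytic difficulty.
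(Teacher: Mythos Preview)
Your proposal is correct and matches the paper's own argument essentially verbatim: the paper simply invokes the decomposition (\ref{theta-chi}) to write the twisted theta lift as a finite $\chi_\Delta$-weighted sum of untwisted lifts $I_h(\tau,f)$ attached to $L_\Delta$, and then appeals to Proposition~4.7 of \cite{BF} for each $h$. Your additional remarks about tracking the cusp data for $L_\Delta$ and the uniformity in $h$ are exactly the bookkeeping the paper leaves implicit.
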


Proposition \ref{4.1}, \ref{4.2} and \ref{4.3} immediately give the
Fourier expansion of $I_{\Delta}(\tau,f)$.
\begin{thr}\label{main3}
  Let $f\in M_0^! (\Gamma_0(N))$ with Fourier expansion as in (\ref{fourier}) and $\tau=u+iv\in \mathbb{H}$.
  If the constant coefficient of $f$ at each cusp of $M$ vanishes, then $I_{\Delta}(\tau,f)$ is a weakly
  holomorphic modular form of weight 3/2 for
  $\Gamma_1(N)$. The Fourier expansion of $I_{\Delta}(\tau,f)$ is given by
  $$I_{\Delta}(\tau,f)= \sum_{\sm m\in \mathbb{Z} \\ m\gg -\infty \esm}
  {\bf t}_f(\chi_{\Delta};m) q^m.$$
   If the constant coefficient
  of $f$ does not vanish, then $I_{\Delta}(\tau, f)$ is non-holomorphic, and
  in the Fourier expansion the following terms occur in addition:
  $$ \sum_{m>0}\sum_{X\in \Gamma_0(N)\backslash L_{0,-m^2}}
   \frac{ a_{\ell_X} (0) + a_{\ell_{-X}} (0)}{8\pi \sqrt v m}
   \chi_{\Delta}(X)\beta(4\pi v m^2) q^{-m^2} .$$
 \end{thr}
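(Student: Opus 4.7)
The plan is to compute the Fourier expansion of $I_\Delta(\tau,f)$ in $\tau = u+iv$ directly and identify it with the asserted series. Writing $\theta_\Delta(\tau,z,\varphi) = \sum_{X \in L} \chi_\Delta(X) e^{2\pi i q(X)\tau} \varphi^0(\sqrt{v}\,X,z)$ and grouping terms by the value $m = q(X)$, I would obtain
\[
I_\Delta(\tau, f) = \sum_{m \in \mathbb{Z}} e^{2\pi i m \tau} \int_M f(z) \sum_{X \in L_{0,m}} \chi_\Delta(X) \varphi^0(\sqrt{v}\,X,z),
\]
after justifying the interchange of the sum over $m$ with the integration over $M$ using the uniform exponential decay of $\theta_\Delta$ at every cusp supplied by Proposition \ref{kernel}. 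Each inner integral is then, by definition, the $m$-th Fourier coefficient of $I_\Delta(\tau,f)$.

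To evaluate each coefficient I would apply the preceding propositions case by case. For $m > 0$, unfolding the sum over $L_{0,m}$ as $\sum_{X \in \Gamma\backslash L_{0,m}} \sum_{\gamma \in \Gamma_X\backslash\Gamma}$ and invoking Proposition \ref{4.1}(1) produces $f(D_X)/|\overline{\Gamma_X}|$ per orbit, so that the $\chi_\Delta$-weighted sum matches ${\bf t}_f(\chi_\Delta;m)$ as given by (\ref{2.1.1}). For $m < 0$ with $m \notin -(\mathbb{Q}^\times)^2$, Proposition \ref{4.1}(2) yields zero, in agreement with (\ref{2.1.2}). For $m = -k^2 \in -(\mathbb{Q}^\times)^2$, Proposition \ref{4.1}(3) splits the orbit integral into a piece $\chi_\Delta(X)\langle f,c(X)\rangle$, whose sum over orbits is ${\bf t}_f(\chi_\Delta;m)$ by (\ref{2.1.3}), together with a non-holomorphic $\beta$-piece weighted by the cuspidal constants $a_{\ell_X}(0) + a_{\ell_{-X}}(0)$. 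Finally, Proposition \ref{4.2} kills the $m = 0$ contribution, again matching (\ref{2.1.2}).

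Assembling these pieces gives
\[
I_\Delta(\tau,f) = \sum_{m} {\bf t}_f(\chi_\Delta;m)\,q^m + \sum_{m>0}\sum_{X\in \Gamma\backslash L_{0,-m^2}} \frac{a_{\ell_X}(0)+a_{\ell_{-X}}(0)}{8\pi\sqrt{v}\,m}\,\chi_\Delta(X)\,\beta(4\pi v m^2)\,q^{-m^2}.
\]
Proposition \ref{4.3} guarantees that only finitely many negative indices $m$ contribute to the first sum, so $I_\Delta(\tau,f)$ has at worst a pole at $\infty$. When every constant coefficient $a_\ell(0)$ vanishes, the $\beta$-double-sum disappears and $I_\Delta(\tau,f)$ is a weakly holomorphic modular form of weight $3/2$ on $\Gamma_1(N)$, the modularity being inherited from Proposition \ref{kernel}. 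The main delicacy I anticipate is the justification of the interchange of integration and summation for the indices where the orbit integrals are only conditionally convergent, namely the $m=0$ contribution (handled by the Stokes and twisted Poisson summation regularization in the proof of Proposition \ref{4.2}) and the $m=-k^2$ contributions (absorbed into the splitting in Proposition \ref{4.1}(3)); once this convergence bookkeeping is secured, the theorem reduces to assembling the three propositions.
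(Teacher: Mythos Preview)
Your proposal is correct and follows exactly the paper's approach: the paper's own proof is literally the single sentence ``Proposition \ref{4.1}, \ref{4.2} and \ref{4.3} immediately give the Fourier expansion of $I_{\Delta}(\tau,f)$,'' preceded by the same decomposition and unfolding you describe. Your write-up is in fact more detailed than the paper's, since you spell out the case-by-case assembly and flag the convergence bookkeeping that the paper leaves implicit.
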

Now we prove Theorem \ref{main1}.
\begin{proof}[Proof of Theorem \ref{main1}]
Suppose that $I_{\Delta}(\tau,f)$ has the Fourier expansion of the form $$I_{\Delta}(\tau,f)=\sum_{m}a_m(v)e^{2 \pi i m  u}.$$
Let  $$F_t(\tau)=\frac{1}{2}\sum_{ m}\left(\frac{m}{t}\right)^2 a_m(v)e^{2 \pi i m  u}-
\frac{1}{2}\left(\frac{-1}{t}\right)\sum_{ m}\left(\frac{m}{t}\right)a_m(v)e^{2 \pi i m  u}.$$
Theorem \ref{main3} implies
that $F_t(\tau)$ is a weakly holomorphic modular form of weight $3/2$ on $\Gamma_1(4t^2N)$,
 which has the Fourier expansion of the form
\begin{equation}\label{Ft}
F_t(\tau)=\sum_{\sm m> m_0 \\ \left(\frac{m}{t}\right)=-1 \esm}\textbf{t}_{f}(\chi_{\Delta}; m)q^m.
\end{equation}
Note that we can take $m_0$ independent of $t$. Thus, if $r$ is sufficiently large, then
$$F_t(\tau)|U_r=2\sum_{\sm m>0 \\ \left(\frac{m}{t}\right)=-1 \esm}\textbf{t}_{f}(\chi_{\Delta}; rm)q^m.$$ This completes the proof.
\end{proof}

\section{Proof of Theorem \ref{main2}}
We begin by stating the following lemma.
\begin{lem}\cite[Theorem 1.1]{T}\label{treneer-thm}
Suppose that $p$ is an odd prime, and that $k$ and $m$ are
integers with $k$ odd. Let $N$ be a positive integer with $4|N$
and $p\nmid N$. Let $g(\tau)=\sum a(n)q^n \in
M_{\frac{k}{2}}^{!}(\Gamma_0(N))\cap\mathcal{O}_K((q))$, where
$\mathcal{O}_K$ denotes the ring of integers of an algebraic
number field $K$. If $m$ is sufficiently large, then for each
positive integer $\nu$, a positive proportion of primes $r\equiv
-1 \pmod{Np^\nu}$ have the property that
$$a(r^3p^m n)\equiv 0 \pmod{p^\nu}$$
for all $n$ relatively prime to $rp$.
\end{lem}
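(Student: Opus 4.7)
The plan is to prove the lemma by reducing $g$ to a cusp form modulo $p^\nu$, applying a Chebotarev-type density result for half-integral weight Hecke operators, and then reading off the coefficient congruence from the standard Hecke recursion. The argument proceeds in three steps.

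The first step is to kill the poles of $g$. Following the strategy of Ahlgren--Ono and Treneer, one constructs an auxiliary half-integral weight form $E_{p,\nu}$ whose $q$-expansion at infinity satisfies $E_{p,\nu} \equiv 1 \pmod{p^\nu}$ and which vanishes to prescribed order at every cusp of $\Gamma_0(Np^c)$ for some $c \ge 1$; such an $E_{p,\nu}$ can be built from suitable $\eta$-quotients or theta series that are $p$-adically close to $1$. Multiplying $g$ by a large power $E_{p,\nu}^{a}$ balances the pole orders of $g$ at every cusp, producing a holomorphic modular form $\tilde g$ on a congruence subgroup of level dividing $Np^{c_1}$ with $\tilde g \equiv g \pmod{p^\nu}$ as $q$-series. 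Applying $U_{p^m}$ for $m$ sufficiently large scales the orders of vanishing at each cusp (weighted by the cusp widths) by roughly $p^m$, so $F := \tilde g \, | \, U_{p^m}$ becomes a genuine cusp form on $\Gamma_1(Np^{c_2})$ satisfying $F \equiv g \, | \, U_{p^m} \pmod{p^\nu}$.

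The second step invokes the half-integral weight analogue of Serre's density theorem (cf. Ono--Skinner): for any half-integral weight cusp form $F$ on $\Gamma_1(N')$ with $\mathcal{O}_K$-integral Fourier coefficients, a positive proportion of primes $r$ in any prescribed residue class modulo $N' p^\nu$ (compatible with the level) satisfy $F \, | \, T_{r^2} \equiv 0 \pmod{p^\nu}$. The specific class $r \equiv -1 \pmod{Np^\nu}$ is selected both to land in a suitable Chebotarev set and so that the character and power-of-$r$ factors in the Hecke relation reduce cleanly modulo $p^\nu$.

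The third step is the Hecke recursion. Writing $k = 2\lambda + 1$ with $\lambda \ge 0$, the Shimura formula gives the coefficient of $q^{rn}$ in $F \, | \, T_{r^2}$ as
\[
b(r^3 n) + \chi(r) \left(\tfrac{(-1)^\lambda r n}{r}\right) r^{\lambda-1} b(rn) + \chi(r)^2 \, r^{2\lambda - 1} b(rn/r^2),
\]
where $b(n)$ denotes the $n$th Fourier coefficient of $F$. For $\gcd(n, rp) = 1$ the middle Kronecker symbol vanishes (since $r \mid rn$) and the last term is zero (since $r^2 \nmid rn$), so $F \, | \, T_{r^2} \equiv 0 \pmod{p^\nu}$ forces $b(r^3 n) \equiv 0 \pmod{p^\nu}$. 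Because $b(n)$ agrees with $a(p^m n)$ modulo $p^\nu$, this gives $a(r^3 p^m n) \equiv 0 \pmod{p^\nu}$ for all $n$ coprime to $rp$. The main obstacle is the pole-killing construction in Step 1: one must simultaneously control vanishing orders at \emph{every} cusp under multiplication by $E_{p,\nu}^a$ and application of $U_{p^m}$, which is the delicate technical heart of Treneer's argument. The hypotheses $4 \mid N$ and $p \nmid N$ are both used here, the former for the well-definedness of half-integral weight modular forms and the latter so that the $p$-part of the level can be freely enlarged without disturbing the $2$-adic structure.
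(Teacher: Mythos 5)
The paper does not prove this lemma at all: it is quoted verbatim as Theorem~1.1 of Treneer's paper \cite{T} and used as a black box, so there is no internal proof to compare yours against. That said, your outline is in substance a faithful summary of how Treneer actually proves it (building on Ahlgren--Ono): (i) multiply $g$ by a large power of the eta-quotient $\eta(z)^{p}/\eta(pz)$, which is congruent to $1$ modulo powers of $p$ and vanishes at the cusps $a/c$ with $p\nmid c$, and then apply $U_{p^m}$ to obtain a genuine cusp form congruent to $g\,|\,U_{p^m}$ modulo $p^\nu$; (ii) invoke the Serre/Ono--Skinner density theorem to get $T_{r^2}$-annihilation modulo $p^\nu$ for a positive proportion of primes $r$; (iii) read off $a(r^3p^mn)\equiv 0 \pmod{p^\nu}$ from the half-integral-weight Hecke recursion, exactly as you do. Two cautions. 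First, your Step~1 is a statement of strategy rather than a proof: the assertion that $U_{p^m}$ ``scales the orders of vanishing at each cusp by roughly $p^m$'' is not literally true --- the effect at a cusp $a/c$ depends on whether $p\mid c$, and the technical core of Treneer's Section~3 consists precisely of sorting the cusps into those handled by the eta-quotient and those handled by $U_{p^m}$; as written, this step appeals to the result it is meant to establish. Second, the density theorem does not hold for ``any prescribed residue class modulo $N'p^\nu$'': the class $r\equiv -1$ is forced by the Chebotarev argument (Frobenius must lie in the conjugacy class of complex conjugation for the mod-$p^\nu$ Galois representations attached to the Shimura lifts), so that phrase should be removed; your following sentence already gives the correct reason for the choice of $-1$.
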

To use Lemma \ref{treneer-thm} in our case, we
have to show that traces of singular moduli are algebraic numbers.
\begin{lem}\label{algebraic number}
If $f(\tau)\in M^{!}_{0}(\Gamma_0(N))\cap K((q))$, then
$\textbf{t}_{f}(\chi_{\Delta},m)$ is an algebraic number for every integer $m$.
\end{lem}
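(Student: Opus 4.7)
The plan is to split into the three cases in Definition \ref{trace} and verify algebraicity in each. Since $\textbf{t}_f(\chi_\Delta; m)$ is a finite sum, it suffices to show that each summand is an algebraic number.

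For $m = 0$ or $m < 0$ with $m \notin -(\mathbb{Q}^\times)^2$, the trace is $0$ by definition, so there is nothing to prove. For $m > 0$, by (\ref{2.1.1}) the trace is a finite $\overline{\mathbb{Q}}$-linear combination of CM values $f(z_Q)$ at CM points of $\mathbb{H}$, where the coefficients $\chi_\Delta(X)/|\overline{\Gamma_X}|$ lie in $\mathbb{Q}$. So the key ingredient is the classical fact that if $f$ is a weakly holomorphic modular function on $\Gamma_0(N)$ with $q$-expansion coefficients in an algebraic number field $K$, then $f(z_Q) \in \overline{\mathbb{Q}}$ for every CM point $z_Q$. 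I would invoke this directly from the theory of complex multiplication (e.g.\ Shimura's theorem that the values of such $f$ at CM points generate abelian extensions of the associated imaginary quadratic fields), noting that the condition $f \in K((q))$ gives integrality of the Fourier expansion at $\infty$, and that algebraicity at CM points follows because $j$ generates the modular function field over $\mathbb{Q}$ and $j(z_Q)$ is algebraic.

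The remaining case is $m<0$ with $m\in -(\mathbb{Q}^\times)^2$, in which (\ref{2.1.3}) expresses $\textbf{t}_f(\chi_\Delta; m)$ as a finite sum of the form
\[
\sum_{X \in \Gamma\backslash L_{0,m}} \chi_\Delta(X) \langle f, c(X)\rangle,
\]
and by the definition of $\langle f, c(X)\rangle$ this in turn is a finite $\overline{\mathbb{Q}}$-linear combination of Fourier coefficients $a_{\ell}(n)$ of $f$ at various cusps $\ell$ of $\Gamma_0(N)$, multiplied by roots of unity $e^{2\pi i \mathrm{Re}(c(\pm X)) n}$ (the rationality of $\mathrm{Re}(c(\pm X))$ follows from the explicit shape $\sigma_\ell^{-1} X = \bigl(\begin{smallmatrix} m & r \\ 0 & -m \end{smallmatrix}\bigr)$ with $r \in \mathbb{Q}$). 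Thus it suffices to show that for every cusp $\ell$ of $\Gamma_0(N)$ the Fourier coefficients $a_\ell(n)$ lie in $\overline{\mathbb{Q}}$. This is the standard fact that if $f \in M_0^!(\Gamma_0(N))$ has Fourier expansion at $\infty$ with coefficients in $K$, then for any $\sigma \in SL_2(\mathbb{Z})$, $f|_0 \sigma$ has Fourier expansion with coefficients in a cyclotomic extension $K(\zeta_N)$; this can be proved by expressing $f$ as a quotient of holomorphic modular forms with algebraic coefficients (using a suitable power of $\eta$ to clear poles at cusps) and appealing to the theorem of Shimura on the rationality field of such forms.

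The main obstacle in this proof is really just the invocation of the algebraicity statements for CM values and for Fourier coefficients at non-infinity cusps; both are classical and well documented, so once the cases are separated the result is immediate. There is a minor bookkeeping issue in the third case to ensure that only finitely many Fourier coefficients $a_\ell(n)$ with $n<0$ appear, but this is guaranteed by $f$ being weakly holomorphic, which forces the principal parts at all cusps to be finite.
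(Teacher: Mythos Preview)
Your proposal is correct and follows essentially the same approach as the paper: both split into the three cases of Definition~\ref{trace}, dispose of the zero case trivially, and invoke the classical fact that CM values of a weakly holomorphic modular function with algebraic Fourier coefficients are algebraic for $m>0$. In fact your treatment of the case $m\in -(\mathbb{Q}^\times)^2$ is more explicit than the paper's, which simply asserts that algebraicity ``can easily be seen'' from (\ref{2.1.2}) and (\ref{2.1.3}); your observation that $\mathrm{Re}(c(\pm X))\in\mathbb{Q}$ gives roots of unity and that the $a_\ell(n)$ lie in $K(\zeta_N)$ fills in exactly what the paper leaves to the reader.
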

\begin{proof}
Assume that $m$ is a positive integer.
Following the argument of \cite[Lemma 5.2]{CK}, we have
that $f(\tau_0)$ is an algebraic number for a CM point $\tau_0$. Thus,
$\textbf{t}_{f}(\chi_{\Delta},m)$ is also an algebraic number  by (\ref{2.1.1}).
In case when $m$ is not positive, we can easily see that
$\textbf{t}_{f}(\chi_{\Delta},m)$ is an algebraic number as well from
(\ref{2.1.2}) and (\ref{2.1.3}).
\end{proof}
Now we prove Theorem \ref{main2}.
\begin{proof}[Proof of Theorem \ref{main2}]
Let
\begin{equation*}
F_t(\tau)=2\sum_{\sm m> m_t \\ \left(\frac{m}{t}\right)=-1
\esm}\textbf{t}_{f}(\chi_{\Delta}; m)q^m.
\end{equation*} From (\ref{Ft}) we have $F_t(\tau)$
is a weakly holomorphic modular form of weight $3/2$ on
$\Gamma_1(4t^4N)$. Since
$$\Delta(\tau):=q\prod_{n=1}^{\infty}(1-q^n)^{24}$$ is the cusp
form of weight 12 on $SL_2(\mathbb{Z})$,  for sufficiently large
$m$, $\Theta(\tau)\Delta^m(\tau) F_t(\tau)$ is a cusp form, and
its Fourier coefficients are algebraic numbers.  Hence, any
$\mathbb{Z}$-module generated by the Fourier coefficients of
$\Theta(\tau)\Delta^m(\tau)F_t(\tau)$ is finitely generated
\cite[Theorem 3.52]{Shimura}. Thus, there exists an integer $\mu$
such that
\begin{equation}\label{mu}\mu F_t(\tau)\in
  M^{!}_{\frac{3}{2}}(\Gamma_1(4t^2N))\cap\mathcal{O}_K((q)),\end{equation}
  where $K$ is the field generated by Fourier coefficients of
  $F_t(\tau)$.
Note that Lemma \ref{treneer-thm} can be extended to $\Gamma_1(N)$ (See
\cite{Br-O}). Thus, from Lemma \ref{treneer-thm} and (\ref{mu}) we
complete the proof.
\end{proof}

\section*{acknowledgement}
This work was supported by the Korea Research
Foundation Grant funded by the Korean Government
(KRF-2008-331-C00005). The author wishes to express his gratitude to KIAS for its support
through Associate membership program.


\begin{thebibliography}{99}
\bibitem{A-O} S. Ahlgren, K. Ono, {\it Arithmetic of singular moduli
and class polynomials}, Compositio Mathematica 141 (2005), 293--312.

\bibitem{A-S} M. Abramowitz and I. Stegun, {\it Pocketbook of Mathematical Functions},
Verlag Harri Deutsch, 1984.


\bibitem{BF}
J.~H. Bruinier, J. Funke,  {\it Traces of CM-values of
modular functions}, Journal fur die Reine und Angewandte
Mathematik 594 (2006), 1--33.

\bibitem{B-O} J. H. Bruinier, K. Ono, {\it Heegner divisors, $L$-functions and
harmonic weak maass forms}, Annals of Mathematics, 172 (2010) Pages 2135-2181.

\bibitem{Br-O} K. Bringmann, K. Ono,  {\it Dyson's ranks and Maass forms},
Annals of Mathematics, 171 (2010), pages 419-449.

\bibitem{B} D. Bump, {\it Automorphic forms and representations}, Cambridge Studies in Advanced Mathematics, 55. Cambridge University Press, Cambridge, 1997.

\bibitem{CK} D. Choi, J. Daeyeol, S. Kang, C. Kim, {\it Traces of singular moduli of arbitrary level modular functions}, Int. Math. Res. Not. IMRN 2007, no. 22, Art. ID rnm110.

\bibitem{Cox}  D.~A. Cox, \textit{Primes of the form $x^2+ny^2$. Fermat,
class field theory, and complex multiplication}, A
Wiley-Intersection Publication.  John Wiley and Sons, Inc., New
York, 1989.

\bibitem{Fu} J. Funke,  {\it Heegner divisors and nonholomorphic modular forms},
Compositio Mathematica 133 (2002), 289--321.


\bibitem{GKZ} B. Gross, W. Kohnen, D. Zagier,
{\it Heegner points and derivatives of $L$-series. II},
Mathematische Annalen 278 (1987), 497--562.

\bibitem{Kim1}
C. Kim,  {\it Borcherds products associated with certain
Thompson series}, Compositio Mathematica 140 (2004),
541--551.

\bibitem{Kim2}
C. Kim,  {\it Traces of singular values and Borcherds
products}, Bulletin of the London Mathematical Society
38 (2006), 730--740.

\bibitem{K} S. Kudla, {\it Central derivatives of Eisenstein series and
height pairings}, Ann. Math. 146 (1997), 545--646.

\bibitem{Kn} A.~W. Knapp,  \textit{Elliptic curves},
Mathematical Notes, 40. Princeton University Press, Princeton,
NJ, 1992.

\bibitem{KM86}
S. Kudla, J. Millson, {\it The theta correspondence and
harmonic forms. I}, Mathematische Annalen 274 (1986),
353--378.

\bibitem{KM90}
S. Kudla, J. Millson, {\it Intersection numbers of cycles
on locally symmetric spaces and Fourier coefficients of
holomorphic modular forms in several complex variables},
Publications Mathematiques de l'IHES 71 (1990),
121--172.

\bibitem{O} K. Ono, {\it The web of modularity: arithmetic of the coefficients of modular forms
and q-series}, Amer. Math. Soc., CBMS Regional Conf. Series in Math., vol. 102, 2004.

\bibitem{Shimura} Shimura, G. \textit{Introduction to the arithmetic theory of automorphic
functions}, Publications of the Mathematical Society of Japan
{11}. Iwanami Shoten and Princeton University Press, Princeton,
1971.

\bibitem{Li}  W.~C. Winnie Li, {\it Newforms and functional equations},
Mathematische Annalen 212 (1975), 285--315.

\bibitem{T} S. Treneer,  {\it Congruences for the coefficients of
weakly holomorphic modular forms}, Proceedings of the
London Mathematical Society 93, no. 3 (2006), 304--324.

\bibitem{Zagier75} D. Zagier, D. {\it Nombres de classes et formes modulaires de poids 3/2},
C.~R.~Acad.~Sci.~Paris (A-B) 281 (1975), 883--886.

\bibitem{Zagier} D. Zagier, {\it Traces of singular moduli.}" in Motives, polylogarithms
and Hodge theory, Part I (edited by F. Bogomolov and L.
Katzarkov), 211--244 International Press, Somerville, 2002.
\end{thebibliography}
\end{document}